\newtheorem{theorem}{Theorem}
\newtheorem{proposition}{Proposition}
\theoremstyle{definition}
\newtheorem{defn}{Definition}
\newtheorem{example}{Example}
\theoremstyle{remark}
\newtheorem{remark}{Remark}
\theoremstyle{assumption}
\theoremstyle{fact}
\theoremstyle{claim}
\theoremstyle{prob}
\newtheorem{prob}{Problem}
\theoremstyle{algo}
\newtheorem{algo}{Algorithm}
\theoremstyle{experiment}
\numberwithin{equation}{section}
\newcommand{\norm}[1]{\left\lVert{#1}\right\rVert}
\newcommand{\pmat}[1]{\begin{pmatrix}#1\end{pmatrix}}
\newcommand{\R}{\mathbb{R}}
\newcommand{\N}{\mathbb{N}}
\renewcommand{\P}{\mathcal{N}}
\newcommand{\X}{\mathcal{X}}
\newcommand{\Z}{\mathbb{Z}}
\newcommand{\Nset}{\mathcal{N}}
\newcommand{\E}{\mathcal{E}}
\renewcommand{\H}{\mathcal{H}}
\newcommand{\Y}{\mathcal{Y}}
\newcommand{\U}{\mathcal{U}}
\newcommand{\C}{\mathcal{C}}
\newcommand{\W}{\mathcal{W}}
\newcommand{\T}{\mathcal{T}}
\DeclareMathOperator{\minimize}{minimize}
\title[]{On maximum hands-off restricted hybrid control for discrete-time switched linear systems}
\author{Darsana U and Atreyee Kundu}
\address{Department of Electrical Engineering,\\Indian Institute of Technology Kharagpur\\West Bengal - 721302, India,\\ E-mail: darsana\_udayakumar@kgpian.iitkgp.ac.in, atreyee@ee.iitkgp.ac.in}
\keywords{Switched systems, Hands-off control, Sparse optimization, Graph theory}
\date{\today}
\begin{document}

	\begin{abstract}
         This paper deals with design of maximum hands-off hybrid control sequences for discrete-time switched linear systems. It is a sparsest combination of a discrete control sequence (i.e. the switching sequence) and a continuous control sequence, both satisfying pre-specified restrictions on the admissible actions, that steers a given initial state of the switched system to the origin of the state-space in a pre-specified duration of time. Given the subsystems dynamics, the sets of admissible continuous and discrete control, the initial state and the time horizon, we present a new algorithm that, under certain conditions on the subsystems dynamics and the admissible control, designs maximum hands-off hybrid control sequences for the resulting switched system. The key apparatuses for our analysis are graph theory and linear algebra. Numerical examples are presented to demonstrate our results.
    \end{abstract}

    \maketitle
\section{Introduction}
\label{s:intro}
    \emph{Maximum hands-off} is an optimal control paradigm where non-zero control efforts over a finite horizon of time is minimized while not compromising on a desired control objective. Such control techniques are a natural choice in the context of optimal control under resource constraints and find wide applications in energy-aware control \cite[Section 8.2]{Nagahara}, minimum fuel control \cite[Section 1.4.1]{Nagahara}, etc. Since the inception of maximum hands-off control paradigm in \cite{Nagahara2016a,Nagahara2016b}, researchers have restricted their attention to time-invariant (non)linear systems, and primarily focussed on theoretical development towards designing a maximum hands-off control sequence, under which a system achieves a certain objective, in a numerically tractable manner. In particular, the theory of maximum hands-off control is so far explored for continuous-time and discrete-time deterministic control-affine systems \cite{Nagahara2016a}, discrete-time linear systems \cite{Nagahara2016b}, a class of continuous-time nonlinear systems \cite{Budhraja2021}, discrete-time stochastic linear systems \cite{Kishida2023}. Recently in \cite{abc} we extended the notion of maximum hands-off control to discrete-time switched linear systems. 
    
    In this paper we continue with our study of maximum hands-off hybrid control for discrete-time switched linear systems. Our focus is on the setting where the subsystems of the switched system operate in closed-loop, i.e., in addition to the switching sequence and the initial condition, the state evolution is governed by a control input.  Since the switching sequence maps to a finite number of subsystems and the control inputs are real numbers of appropriate dimension, they are referred to as discrete control and continuous control, respectively, and as hybrid control together. We also assume that the set of admissible switches between the subsystems and the set of admissible continuous control actions on the subsystems are pre-specified. A restriction on discrete control is natural to systems where switches between certain subsystems are prohibited, e.g., automobile  gear switching, while a restriction on continuous control is natural to systems where actuator saturations and noise are prevalent, e.g., aerospace systems. We call a hybrid control sequence as maximum hands-off if it satisfies the given restrictions on continuous and discrete control and is a sparsest in the sense that the sum of (a) the number of switches from one subsystem to another, i.e., changes in the discrete control in consecutive time instants and (b) the number of non-zero continuous control, is minimum among all hybrid control sequences that satisfy the given restrictions and steer a given initial state of the switched system to the origin of the state-space in a given time horizon. 
       It is clear that a maximum hands-off hybrid control can be expressed as solution to a minimization problem whose (i) decision variable is a hybrid control sequence that obeys pre-specified restrictions on continuous and discrete control and achieves the above mentioned control objective, and (ii) the objective function is the sum of individual sparsity of two vectors --- the first vector contains the difference between every two consecutive elements of the discrete control sequence and the second vector is the continuous control sequence. Notice that this optimization problem is non-convex --- in particular, the domain of discrete control is non-convex and sparsity of a vector (commonly measured by \(\ell_0\)-norm) is a non-convex function. As a result, numerical design of maximum hands-off hybrid control sequences is not a straightforward task. In this paper we are interested in the algorithmic design of maximum hands-off hybrid control sequences for discrete-time switched linear systems.
   
      We studied the above problem in the absence of pre-specified restrictions on discrete and continuous control in \cite{abc}, which to the best of our knowledge, is the only work on maximum hands-off hybrid control for discrete-time switched linear systems available in the literature. An algorithm to design sparse hybrid control sequences by employing a mixed-integer convex optimization problem was presented. The optimization problem computes a sparsest hybrid control sequence employing \(\ell_1\)-approximation (i.e., a convex relaxation of \(\ell_0\)-norm). It was shown that the \(\ell_0\)-sparsest and \(\ell_1\)-sparsest quantities coincide under satisfaction of certain conditions (viz., restricted isometry property and range space property) by certain matrices constructed using the subsystems dynamics, the given initial condition and the given time horizon. However, in practice, numerical verification of these sufficient conditions are often difficult. Indeed, the class of deterministic matrices that satisfy restricted isometry property is small \cite{Bourgain2011} and often not representative of the dynamics of a practical switched system. In this paper we present a new algorithm that designs maximum hands-off hybrid control sequences for discrete-time switched linear systems that admit a certain abstraction of the state-space. 
      
    We consider the subsystems dynamics, the sets of admissible continuous and discrete control, the initial state and the time horizon to be given, and take the following route towards the design of a maximum hands-off hybrid control: 
    \begin{itemize}[label=\(\circ\),leftmargin=*]
        \item Firstly, we construct an abstraction of the state-space as a set of mutually disjoint sets of states (representing different regions of the state-space) that together span the entire state-space and admit well-defined transitions under admissible switches and continuous control. 
        \item Secondly, we associate a labelled directed graph to a switched system and call it a transition graph. The vertices of this graph are labelled with the elements of the state-space abstraction, and the edges of this graph represent transitions between the different regions of the state-space under switches and continuous control. The edges are labelled to keep track of the corresponding switches and continuous control causing a transition. 
        \item Finally, we show that a maximum hands-off hybrid control sequence corresponds to a minimum weight walk of a certain length that starts and ends at certain specific vertices on the transition graph of a switched system in the sense that the maximum hands-off discrete and continuous control sequences can be constructed by employing the relevant information on the weights of the edges that appear in the walk. 
    \end{itemize}
    
    With our techniques, maximum hands-off hybrid control sequences can be designed by employing off-the-shelf graph algorithms that find minimum weight walks on a directed graph. We also present a set of sufficient conditions on the subsystems dynamics and the admissible control under which the state-space abstraction, which is a cornerstone of our results, is admitted by a switched system. To the best of our knowledge, this is the first instance in the literature where the design of maximum hands-off hybrid control sequences for discrete-time switched linear systems under pre-specified restrictions on continuous and discrete control, is addressed using graph theory. We also demonstrate through numerical examples that our results cater to switched systems where restricted isometry (resp., range space property) based maximum hands-off guarantees, presented in our earlier work \cite{abc}, cannot be inferred.
    
    The remainder of this paper is organized as follows: In Section \ref{s:prob-stat} we formulate the problem under consideration. A set of preliminaries relevant to our results is stated in Section \ref{s:prelims}. Our results appear in Section \ref{s:mainres}. We present numerical examples in Section \ref{s:num-ex}. We conclude in Section \ref{s:concln} with a brief discussion on future research directions. Proofs of our results appear in Section \ref{s:all-proofs}.
    
    {\bf Notation}.  \(\R\) is the set of real numbers, \(\N\) is the set of natural numbers, and \(\Z\) is the set of integers. For \(z\in\R^n\), \(\norm{z}_0\) denotes its \(\ell_0\)-norm, i.e., the number of non-zero elements in \(z\). We let \(0_n\) denote an \(n\)-dimensional zero vector.

\section{Problem statement}
\label{s:prob-stat}
     We consider a discrete-time switched linear system 
    \begin{align}
    \label{e:swsys}
        x(t+1) = A_{\nu(t)}x(t)+b_{\nu(t)}\mu(t),\:\:x(0)=x_0,\:t=0,1,\ldots,T-1,
    \end{align}
    where \(x(t)\in\X\subseteq\R^d\) is the vector of states at time \(t\), and \(\mu(t)\in\R\) and \(\nu(t)\in\Nset:=\{1,2,\ldots,N\}\) are the continuous control and discrete control at time \(t\), respectively. 
    
    The pair \((\nu(t),\mu(t))\) is called a \emph{hybrid control} at time \(t\). Let
       \( \mu_T = \pmat{\mu(0)\\\mu(1)\\\cdots\\\mu(T-1)}\) 
       and \(\nu_T = \pmat{\nu(0)\\\nu(1)\\\cdots\\\nu(T-1)}\)
    denote the continuous control and discrete control sequences, respectively. We will call the pair \((\nu_T,\mu_T)\) a \emph{hybrid control sequence}. 
    
    Let the set of admissible switches between the subsystems, 
    \[
        \E(\Nset):= \{(i,j)\:|\:\:i\:\:\text{to}\:\:j\:\:\text{is allowed},\:i,j\in\P\}\subseteq\P\times\P
    \] 
    and the set of admissible continuous control inputs, \(\U\subseteq\R\), be given. We will let \(\P_s\subseteq\P\) denote the set of all \(i\in\P\) such that \((i,i)\in\E(\P)\).
    
    \begin{defn}
    \label{d:adm_cont}
        The hybrid control sequence, \((\nu_T,\mu_T)\), is \emph{admissible}, if the following holds:
        \begin{align}
            \label{e:adm-dis} &(\nu(t),\nu(t+1))\in\E(\Nset),\:\:t=0,1,\ldots,T-2,
            \intertext{and}
            \label{e:adm-con} &\mu(t)\in\U,\:\:t=0,1,\ldots,T-1.
        \end{align}
    \end{defn}
     Let \(\H\) denote the set of all admissible hybrid control sequences.
    
    In the sequel we will assume that the subsystems dynamics, \((A_i,b_i)\in\R^{d\times d}\times\R^d\), \(i\in\Nset\), the set of admissible switches between the subsystems, \(\E(\Nset)\), the set of admissible control inputs, \(\U\), and the time horizon, \(T\in\N\), are pre-specified. 
    
    Given \(\xi\in\R^d\), we define
    \begin{align*}
         \C_{\xi,T} := \Bigl\{(\nu_T,\mu_T)\in\H\:\big|\:x(0)&=\xi,\\
         x(t+1) &= A_{\nu(t)}x(t)+b_{\nu(t)}\mu(t),
       \:t=0,1,\ldots,T-1,\\
        x(T)&=0_d\Bigr\}.
    \end{align*}
    It follows that \(\C_{\xi,T}\subseteq\H\) is the set of all admissible hybrid control sequences, \((\nu_T,\mu_T)\), that steer the state of the switched system \eqref{e:swsys} from \(x(0)=\xi\) to the origin in \(T\) time instants, i.e., \(x(T)=0_d\). We are interested in the sparsest element of the set \(\C_{\xi,T}\). 
    
    Let us define \(\Delta:\Nset^T\to\Z^{T-1}\) as 
    \begin{align*}
             \nu_T\mapsto\Delta(\nu_T):= \pmat{\nu(1)-\nu(0) \\ \nu(2)-\nu(1) \\ \cdots \\ \nu(T-1)-\nu(T-2)}.
    \end{align*}
    \begin{defn}
    \label{d:hybrid}
    \rm{
        An admissible hybrid control sequence, \(({\nu}^*_T,{\mu}^*_T)\in\C_{\xi,T}\), is called a \emph{maximum hands-off hybrid control sequence} for the switched system \eqref{e:swsys}, if it is a solution to the following optimization problem:
        \begin{align}
        \label{e:optprob1}
            \underset{{(\nu_T,\mu_T)\in\C_{\xi,T}}}\minimize\:\:\norm{\Delta(\nu_T)}_0+\norm{\mu_T}_0.
        \end{align}
    }
    \end{defn}
    
           A maximum hands-off hybrid control sequence minimizes the total number of time instants when an admissible continuous control or, an admissible discrete control or, both are to be applied in order to steer the initial state \(x(0)=\xi\) of the switched system \eqref{e:swsys} to the origin, \(0_d\), in \(T\) time instants. On the one hand, a minimization of \(\norm{\mu_T}_0\) gives a continuous control sequence, \(\tilde{\mu}_T\), with elements satisfying \(\tilde{\mu}(t)\in\U\), \(t=0,1,\ldots,T-1\) that minimizes the total number of time instants when a continuous control is to be applied. On the other hand, a minimization of \(\norm{\Delta(\nu_T)}_0\) gives a discrete control sequence, \(\tilde{\nu}_T\), with elements satisfying \((\tilde{\nu}(t),\tilde{\nu}(t+1))\in\E(\Nset)\), \(t=0,1,\ldots,T-2\) that minimizes the total number of time instants when the discrete control changes its value. A maximum hands-off hybrid control sequence is, however, not necessarily \((\tilde{\nu}_T,\tilde{\mu}_T)\). Indeed, individual minimization of \(\norm{\mu_T}_0\) and \(\norm{\Delta(\nu_T)}_0\) does not necessarily ensure that \((\tilde{\nu}_T,\tilde{\mu}_T)\) steers \(x(0)=\xi\) to \(x(T)=0_d\). We aim for a hybrid control sequence, \((\overline{\nu}_T,\overline{\mu}_T)\) whose discrete and continuous control components are admissible and together achieve the desired control objective while maximizing the total number of time instants when a hybrid controller needs not take an action. 
       
    \begin{remark}
    \label{rem:non-convex}
    \rm{
        The optimization problem \eqref{e:optprob1} is non-convex due to the non-convexity of (a) \(\ell_0\)-norm \cite[Chapters 2 and 3]{Nagahara} and (b) the domain of the discrete control, \(\Nset\). It is well-known that solving such problems is numerically hard. A search for algorithms to solve the optimization problem \eqref{e:optprob1} is, therefore, of interest. 
    }
    \end{remark}

   In this paper we will solve the following:
    \begin{prob}
    \label{prob:main}
        Given the subsystems dynamics, \((A_i,b_i)\in\R^{d\times d}\times\R^{d}\), \(i\in\Nset\), the set of admissible switches between the subsystems, \(\E(\Nset)\subseteq\P\times\P\), the set of admissible continuous control, \(\U\subseteq\R\), an initial condition, \(x(0)=\xi\in\R^d\), and a time horizon, \(T\in\N\), design (algorithmically) a maximum hands-off hybrid control sequence, \(({\nu}^*_T,{\mu}^*_T)\in\C_{\xi,T}\), for the switched system \eqref{e:swsys}.
    \end{prob}
      
    We present a solution to Problem \ref{prob:main} employing graph-theoretic entities in Section \ref{s:mainres}. Prior to that we catalog a set of preliminaries. 
\section{Preliminaries}
\label{s:prelims}
 \begin{defn}
    \label{d:partition}
    \rm{
        The sets of states, \(\bigl(\X_j\bigr)_{j=0}^{n}\), is called a \emph{state-space abstraction} of the switched system \eqref{e:swsys}, if the following conditions are satisfied: 
        \begin{enumerate}[label = \roman*),leftmargin=*]
            \item \(\X_0 = \{0_d\}\),
            \item \(\X_j\cap\X_k = \emptyset\) for all \(j,k\in\{0,1,\ldots,n\}\), \(j\neq k\),
            \item \(\displaystyle{\bigcup_{j=0}^{n}}\X_j = \X\),
            \item for each \(\X_j\in\{\X_1,\X_2,\ldots,\X_n\}\) and \(i\in\P\), there exists \(\X_k\in\{\X_0,\X_1,\ldots,\X_n\}\) such that for all \(x\in\X_j\), \(y=A_ix+B_i\mu\in\X_k\) with \(\mu=0\), and
            \item for each \(\X_j\in\{\X_1,\X_2,\ldots,\X_n\}\) and \(i\in\P\), there exists \(\X_{\ell}\in\{\X_0,\X_1,\ldots,\X_n\}\) such that for all \(x\in\X_j\), \(y=A_ix+B_i\mu\in\X_{\ell}\) with \(\mu=\mu(x)\in\U\setminus\{0\}\).
        \end{enumerate}
    }
    \end{defn}
    
    The state-space abstraction, described in Definition \ref{d:partition}, partitions the state-space into a finite number of regions that are mutually disjoint, together span the entire state-space, and the transitions between the regions under the subsystems dynamics are well-defined. Notice that the individual sets in the partition, \(\X_j\), \(j=1,2,\ldots,n\) are allowed to contain infinite elements. Clearly, whether such a partition exists, depends on the subsystems dynamics, \((A_i,b_i)\), \(i\in\Nset\) and the set of admissible continuous control, \(\U\). 
    We shall employ this state-space abstraction towards designing maximum hands-off hybrid control sequences for the switched system \eqref{e:swsys}. Our results will, therefore, be applicable to discrete-time switched linear systems that admit the state-space abstractions under consideration. In Section \ref{s:mainres} we will provide sufficient conditions on the subsystems dynamics, \((A_i,b_i)\), \(i\in\Nset\) and the set of admissible continuous control, \(\U\), under which a switched system \eqref{e:swsys} will admit a state-space abstraction.
      
    \begin{defn}
    \label{d:graph}
        The \emph{transition graph} of the switched system \eqref{e:swsys} is the labelled directed graph, \(G(V,E)\), that consists of the following components:
        \begin{itemize}[label=\(\circ\),leftmargin=*]
            \item a set of vertices, \(V = \{v_0,v_1,\ldots,v_n\}\), where each element is labelled as \(\ell(v_p) = \X_p\), \(p=0,1,\ldots,n\), and
            \item a set of edges, \(E = E_1\cup E_2\), where 
                \(
                    E_1 = \{(v_p,v_q)\:|\:\ell(v_p)=\X_j,\ell(v_q)=\X_k\:\text{such that there}\\\text{ exists}\:\)
               \( i\in\P\:\text{satisfying}\:A_ix+B_i\mu\in\X_k\:\text{for all}\:x\in\X_j\:\text{and}\:\mu=0 \},
                \)
                and
                \(
                    E_2 = \{(v_p,v_q)\:|\:\ell(v_p)=\X_j,\ell(v_q)=\X_{\ell}\:\text{such that}\:\text{ there exists}\)
                    \(\:
                i\in\P\:\text{satisfying}\:A_ix+B_i\mu\in\X_{\ell}\:\: \text{for all}\:x\in\X_j\:
                \text{and}\text{ some}\\\:\mu=\mu(x)\in\U\setminus\{0\} .
                \)
            The edges \((v,v')\in E_1\) are labelled as \(\ell(v,v')=(i,0)\) and
            the edges \((v,v')\in E_2\) are labelled as \(\ell(v,v')=(i,\mu(x))\).
        \end{itemize}
    \end{defn}
        
        \begin{remark}
        \label{rem:multi-graph}
        \rm{
        Notice that \(G(V,E)\) is a multi-graph possibly in the sense that between two fixed vertices, multiple edges with different labels may exist. For simplicity of exposition, we will continue with the word \emph{graph} unless an explicit mention of the word \emph{multi-graph} is necessary.
        }
        \end{remark}
        
        Fix \((v,v')\in E\). We will use \(\alpha(v,v')\) and \(\beta(v,v')\) to refer to the first element of \(\ell(v,v')\) and the second element of \(\ell(v,v')\), respectively.

    In Definition \ref{d:graph} we associate a labelled directed graph, \(G(V,{E})\), to the switched system \eqref{e:swsys}. The vertices of this graph are the elements in the state-space abstraction of the switched system under consideration. In other words, the vertices represent different regions of the state-space. The edges represent transitions between these regions. The labels on the edges carry information about the cause of the transitions in terms of the discrete and continuous control. We shall connect maximum hands-off hybrid control sequences with entities on the transition graph of the switched system \eqref{e:swsys}.
    
    \begin{defn}
    \label{d:walk}
    \rm{
        A \emph{T-walk} on \(G(V,E)\) is a walk \(W_T = \overline{v}_0,(\overline{v}_0,\overline{v}_1)\),
        \(\overline{v}_1,(\overline{v}_1,\overline{v}_2),\overline{v}_2,\ldots,\overline{v}_{r-1}\),\\\((\overline{v}_{r-1},\overline{v}_{r}),\overline{v}_r\) that satisfies:
        \begin{enumerate}[label=\roman*),leftmargin=*]
            \item \((\alpha(\overline{v}_m,\overline{v}_{m+1}),\alpha(\overline{v}_{m+1},\overline{v}_{m+2}))\in\E(\Nset)\), \(m=0,1,\ldots,r-2\), and
            \item the length of \(W_T\) is at most \(T\), i.e., \(r\leq T\).
        \end{enumerate}
    }
    \end{defn}
    
    \begin{defn}
    \label{d:weight}
    \rm{
        The \emph{weight} of a \(T\)-walk, \(W_T\), on \(G(V,{E})\) is computed as
        \[
            w(W_T) = \sum_{m=1}^{r-1}\overline{\alpha}(\overline{v}_m,\overline{v}_{m+1})+ \sum_{m=0}^{r-1}{\overline{\beta}}(\overline{v}_m,\overline{v}_{m+1}),
        \]
        where 
        \begin{align*}
            \overline{\alpha}(\overline{v}_m,\overline{v}_{m+1}) &=
            \begin{cases}
                1,\:\:\text{if}\:\:\alpha(\overline{v}_m,\overline{v}_{m+1})\neq\alpha(\overline{v}_{m-1},\overline{v}_{m}),\\
                0,\:\:\text{if}\:\:\alpha(\overline{v}_m,\overline{v}_{m+1})=\alpha(\overline{v}_{m-1},\overline{v}_{m}),
            \end{cases}
            \intertext{and}
              \overline{\beta}(\overline{v}_m,\overline{v}_{m+1}) &=
            \begin{cases}
                1,\:\:\text{if}\:\:\beta(\overline{v}_m,\overline{v}_{m+1})\neq 0,\\
                0,\:\:\text{if}\:\:\beta(\overline{v}_m,\overline{v}_{m+1})=0,
            \end{cases}
        \end{align*}
        \(m=1,2,\ldots,r-1\).
        }
    \end{defn}
    
     Let \(\W_{\T}(\X,\Y)\) be the set of all \(T\)-walks, \(W_T = \overline{v}_0,(\overline{v}_0,\overline{v}_1)\),
        \(\overline{v}_1,(\overline{v}_1,\overline{v}_2),\overline{v}_2,\ldots,\overline{v}_{r-1}\),\\\((\overline{v}_{r-1},\overline{v}_{r}),\overline{v}_r\), on \(G(V,{E})\), that satisfy \(\ell(\overline{v}_0) = \X\) and \(\ell(\overline{v}_r) = \Y\).
        
        \begin{defn}
        \label{d:hands-off walk}
        \rm{
            Let \(\xi\in\R^d\) be given. Suppose that \(\xi\in\X_s\), \(s\in\{0,1,\ldots,n\}\). A \emph{\(\xi\)-hands-off} walk on \(G(V,{E})\) is a \(T\)-walk, \(W^*_T = \overline{v}_0,(\overline{v}_0,\overline{v}_1)\),
        \(\overline{v}_1,(\overline{v}_1,\overline{v}_2),\overline{v}_2,\ldots,\overline{v}_{r-1},(\overline{v}_{r-1},\overline{v}_{r}),\overline{v}_r\in\)\\\(\W_\T(\X_s,\X_0)\), that satisfies 
        \(\ell(\overline{v}_0)=\X_s\), \(\ell(\overline{v}_r)=\X_0\) and \(w(W^*_T)\leq w(W_T)\) for all \(W_T\in\W_\T(\X_s,\X_0)\).
            }
        \end{defn}
    
    We will employ hands-off walks on \(G(V,{E})\) for the design of maximum hands-off control for the switched system \eqref{e:swsys}. We are now in a position to present our solution to Problem \ref{prob:main}.
\section{Main results}
\label{s:mainres}
    Our solution to Problem \ref{prob:main} is the following:
    
    \begin{theorem}
    \label{t:mainres}
        Consider the switched system \eqref{e:swsys}. Suppose that the subsystems dynamics, \((A_i,b_i)\), \(i\in\P\), the set of admissible switches, \(\E(\Nset)\), the set of admissible continuous control, \(\U\), the initial state, \(\xi\), and the time horizon, \(T\), are given, and that the switched system \eqref{e:swsys} admits a state-space abstraction as described in Definition \ref{d:partition}. Let \(\W_{\T}(\X_s,\X_0)\neq\emptyset\). Then a hybrid control sequence obtained from Algorithm \ref{algo:design} is a maximum hands-off hybrid control sequence, \((\overline{\nu}^*_T,\overline{\mu}^*_T)\in\C_{\xi,T}\).
    \end{theorem}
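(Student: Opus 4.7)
The plan is to establish a weight-preserving correspondence between admissible hybrid control sequences in \(\C_{\xi,T}\) and \(T\)-walks in \(\W_{\T}(\X_s,\X_0)\) on the transition graph \(G(V,E)\), so that the \(\xi\)-hands-off walk returned by Algorithm \ref{algo:design} translates directly into a minimizer of \eqref{e:optprob1}. The forward direction assigns to each \((\nu_T,\mu_T)\in\C_{\xi,T}\) a walk whose vertices \(\overline v_0,\ldots,\overline v_T\) satisfy \(\ell(\overline v_t)=\X_{k_t}\ni x(t)\). By Definition \ref{d:partition}(i)--(iii) the sequence \(\X_{k_0},\ldots,\X_{k_T}\) is well defined with \(k_0=s\) and \(k_T=0\), and conditions (iv), (v) guarantee that each transition \(x(t)\mapsto x(t+1)\) under \((\nu(t),\mu(t))\) is witnessed by an edge in \(E_1\cup E_2\) whose label coordinates are \(\nu(t)\) and \(\mu(t)\); the admissibility constraint \eqref{e:adm-dis} is identical to condition (i) of Definition \ref{d:walk}. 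The reverse direction reads off the edge labels of any walk in \(\W_{\T}(\X_s,\X_0)\) to recover an admissible hybrid control sequence driving \(\xi\) through the matching abstraction sets to \(0_d\).

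The weight computation is then immediate from Definition \ref{d:weight}: \(\sum_{m=1}^{r-1}\overline\alpha(\overline v_m,\overline v_{m+1})\) counts exactly those indices where the first coordinate of the edge label changes, which equals \(\norm{\Delta(\nu_T)}_0\), and \(\sum_{m=0}^{r-1}\overline\beta(\overline v_m,\overline v_{m+1})\) counts the non-zero entries of \(\mu_T\), which equals \(\norm{\mu_T}_0\). Consequently \(w(W_T)\) equals the objective of \eqref{e:optprob1} along the correspondence, so minimizing over \(\W_{\T}(\X_s,\X_0)\) is equivalent to minimizing over \(\C_{\xi,T}\), and any \(\xi\)-hands-off walk yields a maximum hands-off hybrid control sequence in the sense of Definition \ref{d:hybrid}.

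The main technical obstacle will be reconciling walks of length \(r<T\) with control sequences of length exactly \(T\), since such short walks must be extended by \(T-r\) further time steps without inflating either sparsity term. Because \(A_i\cdot 0_d+b_i\cdot 0=0_d\) for every \(i\in\P\), the continuous tail can be set identically to zero; the discrete tail can be held constant only when \(\nu(r-1)\in\P_s\), so that the self-loop in \(\E(\Nset)\) is available. I would address this by showing that a minimum-weight walk in \(\W_{\T}(\X_s,\X_0)\) can always be normalised to terminate on an edge whose first coordinate lies in \(\P_s\), by freely appending a terminal self-loop of \(E_1\) at \(v_0\) (which exists for every \(i\in\P_s\) since \(A_i\cdot 0_d=0_d\)); the extension of the induced control sequence to length \(T\) is then weight-preserving, and Theorem \ref{t:mainres} follows.
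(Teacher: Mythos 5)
Your proposal is correct and follows essentially the same route as the paper: both establish that a walk in \(\W_{\T}(\X_s,\X_0)\) and a sequence in \(\C_{\xi,T}\) correspond under the edge-label construction, that the walk weight of Definition \ref{d:weight} equals the objective of \eqref{e:optprob1}, and that minimality therefore transfers from the \(\xi\)-hands-off walk to the induced hybrid control sequence (the paper phrases the last step as a contradiction with a hypothetical sparser competitor). Your explicit treatment of the forward direction (every competitor in \(\C_{\xi,T}\) induces a walk by tracking which abstraction set \(x(t)\) lies in) and of the padding when \(r<T\) corresponds to what the paper handles via Case II of Algorithm \ref{algo:design} and the footnote to \eqref{e:pf1_step1}.
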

    
     \begin{algo}
    \label{algo:design}
    \rm{
        {\bf Computation of a maximum hands-off hybrid control sequence for the switched system \eqref{e:swsys}}
        \begin{enumerate}[label=Step \Roman*.]
        	 \item Let \(W^*_T = \overline{v}_0,(\overline{v}_0,\overline{v}_1)\),
        \(\overline{v}_1,(\overline{v}_1,\overline{v}_2),\overline{v}_2,\ldots,\overline{v}_{r-1}\),\((\overline{v}_{r-1},\overline{v}_{r}),\overline{v}_r\in \W_{\T}(\X_s,\X_0)\) be a \(\xi-\)hands-off walk on \(G(V,{E})\).
            \item\label{step:algo_step2} Construct a hybrid control sequence, \((\overline{\nu}_T,\overline{\mu}_T)\), as follows:
            \begin{enumerate}[label= Case \Roman*.]
                \item If \(r=T\), then construct the discrete control sequence as: 
                \[
                    \overline{\nu}(t) = \alpha(\overline{v}_t,\overline{v}_{t+1}),\:\:t=0,1,\ldots,r-1,
                \]
                and construct the continuous control sequence as: 
                \[
                    \overline{\mu}(t)=\beta(\overline{v}_t,\overline{v}_{t+1}),\:\:t=0,1,\ldots,r-1.
                \]
                \item If \(r<T\), then construct the discrete control sequence as: 
                \[
                    \overline{\nu}(t) = \alpha(\overline{v}_t,\overline{v}_{t+1}),\:\:t=0,1,\ldots,r-1
                \] 
                and 
                \begin{align*}
                    \overline{\nu}(t) = 
                    \begin{cases}
                        \overline{\nu}(t-1),\:\:&\text{if}\:(\overline{\nu}(t-1),\overline{\nu}(t-1))\in\E(\Nset),\\
                        i\in\P_s,\:\:&\text{if}\:(\overline{\nu}(t-1),\overline{\nu}(t-1))\notin\E(\Nset),\:i\in\P_s\:\text{and}\:(\overline{\nu}(t-1),i)\in\E(\Nset),\\
                        j\in\P,\:\:&\text{if}\:(\overline{\nu}(t-1),\overline{\nu}(t-1))\notin\E(\Nset),\:(\overline{\nu}(t-1),i)\notin\E(\Nset)\:\text{for all}\:i\in\P_s\:\\
                        &\qquad\qquad\text{and}\:(\overline{\nu}(t-1),j)\in\E(\Nset),
                    \end{cases}
                \end{align*}
                \(t=r,r+1,\ldots,T-1\), and construct the continuous control sequence as: 
                \[
                    \overline{\mu}(t)=\beta(\overline{v}_t,\overline{v}_{t+1}),\:\:t=0,1,\ldots,r-1                
                \]
                and
                \[
                    \overline{\mu}(t)=0,\: t=r,r+1,\ldots,T-1.
                \]
            \end{enumerate}
        \end{enumerate}
        }
    \end{algo}
      
    Algorithm \ref{algo:design} designs a hybrid control sequence by employing a \(\xi\)-hands-off walk, \(W^*_T = \overline{v}_0,(\overline{v}_0,\overline{v}_1),\overline{v}_1,\ldots,\overline{v}_{T-1}\),\((\overline{v}_{r-1},\overline{v}_r)\),\(\overline{v}_r\in \W_{\T}(\X_s,\X_0)\) on \(G(V,{E})\). In particular, the discrete control, \(\nu(t)\), \(t=0,1,\ldots,r-1\) and the continuous control, \(\mu(t)\), \(t=0,1,\ldots,r-1\), are chosen as the subsystems indices and continuous control that appear as the first and the second element on the labels of the edges that appear in \(W^*_T\), respectively. In case of \(r<T\), the discrete control, \(\nu(t)\) and the continuous control, \(\mu(t)\), \(t=r,r+1,\ldots,T-1\) are chosen obeying the elements of \(\E(\Nset)\) ensuring minimum possible addition to sparsity and \(\mu=0\), respectively. This is no loss of generality as \(A_i0_d+b_i0=0_d\) for all \(i\in\P\). Theorem \ref{t:mainres} asserts that for a switched system \eqref{e:swsys} that admits a state-space abstraction, described in Definition \ref{d:partition}, such that the set of \(\xi\)- hands-off walks on the transition graph is non-empty, a hybrid control sequence obtained from Algorithm \ref{algo:design} is a maximum hands-off hybrid control sequence. We present a proof of Theorem \ref{t:mainres} in Section \ref{s:all-proofs}.
    
    \begin{remark}
    \label{rem:extra-comp}
    \rm{
        Notice that in the construction of continuous control sequence in Algorithm \ref{algo:design}, whenever \(\beta(\overline{v}_t,\overline{v}_{t+1})\), \(t=0,1,\ldots,r-1\), is dependent on \(x\), an explicit computation of \(x(t)\), \(t\in\{1,2,\ldots,r-1\}\) is required under \(x(0)=\xi\) and the already chosen \(\nu(\tau)\) and \(\mu(\tau)\), \(\tau=0,1,\ldots,t-1\). If \(\beta(\overline{v}_t,\overline{v}_{t+1})\), \(t=0,1,\ldots,r-1\) is any \(\mu\in\U\setminus\{0\}\), then this computation can be saved.
    }
    \end{remark}

    \begin{remark}
    \label{rem:compa}
    \rm{
        In the existing work \cite{abc} maximum hands-off guarantee to sparse hybrid control sequences under unrestricted switches between the subsystems and unrestricted continuous control, was provided using restricted isometry property (resp., range space property) of certain matrices involving the subsystems dynamics, the initial state and the time horizon. In Theorem \ref{t:mainres} (resp., Algorithm \ref{algo:design}) we design maximum hands-off hybrid control sequences for discrete-time switched linear systems under pre-specified restrictions on (a) the switches between the subsystems and (b) the continuous control, and rely on admissibility of a state-space abstraction by the switched system under consideration. We will present numerical examples in Section \ref{s:num-ex} where the results of \cite{abc} do not hold but the results presented in this paper are applicable. However, in the absence of a mathematical relation between restricted isometry property (resp., range space property) and admissibility of a state-space abstraction, we are unable to compare the sizes of the classes of switched systems \eqref{e:swsys} that the results of \cite{abc} and of this paper cater to. 
    }
    \end{remark}
    
     Let us now consider that a state-space abstraction of the switched system \eqref{e:swsys} and its corresponding transition graph, \(G(V,{E})\), be given. Towards implementing Algorithm \ref{algo:design}, we require to find a \(\xi\)-hands-off walk on \(G(V,{E})\). The following algorithm can be employed for this purpose.
    \begin{algo}
    \label{algo:walk-design}
    \rm{
        {\bf Construction of a hands-off walk required for Algorithm \ref{algo:design}}
        \begin{enumerate}[label=Step \Roman*.]
            \item\label{alg2-step1} Find all \(T\)-walks \(W_T = \overline{v}_0,(\overline{v}_0,\overline{v}_1)\),
        \(\overline{v}_1,(\overline{v}_1,\overline{v}_2),\overline{v}_2,\ldots\),\(\overline{v}_{r-1}\),\((\overline{v}_{r-1},\overline{v}_{r}),\overline{v}_r\in \W_{\T}(\X_s,\X_0)\).
            \item Compute the weight, \(w(W_T)\), for all \(W_T\in \W_{\T}(\X_s,\X_0)\) and choose \(W^*_T\) to be one that yields a minimum value of \(w(W_T)\).
        \end{enumerate}
        }
    \end{algo}
    
    The problem of listing all walks of length less than equal to \(T\) between the two vertices \(u\) and \(v\), where \(\ell(u)=\X_s\) and \(\ell(v)=\X_0\) on a directed graph (employed in \ref{alg2-step1} of Algorithm \ref{algo:walk-design}) is a standard problem in graph algorithms and can be solved by using minor logical modifications in off-the-shelf graph algorithms, e.g., depth-first search \cite{Cormen_algo}, breadth-first search \cite{Cormen_algo}, etc.

    Our results so far rely on the admissibility of a state-space abstraction by the switched system \eqref{e:swsys}. Intuitively, if the subsystems dynamics, \((A_i,b_i)\), \(i\in\Nset\) and the set of admissible continuous control, \(\U\) together can map large portions of the state-space into fixed regions, then a finite set of sets of states abstracting the state-space can be found. The number of elements in the abstraction may, however, be large requiring us to deal with large directed graphs. An iterative trial-and-error search for the construction of the state-space abstraction can be conducted as follows: Let \(\X = \R^n\). Define \(\X_0 = \{0_d\}\) and \(\X_{j^{(k)}} = \{x\in\R^n\:|\:\text{only}\:\:k\:\:\text{elements of}\:\:x\:\:\text{are non-zero}\}\), \(j^{(k)} = 1,2,\ldots,{d\choose k}\), \(k=1,2,\ldots,d\). Notice that the components of \(\{\X_0,\X_1,\ldots,\X_n\}\), \(\displaystyle{n = \sum_{k=1}^{d}{d\choose k}}\) are mutually disjoint and their union spans the entire state-space. Now, using the subsystems dynamics, \((A_i,b_i)\), \(i\in\P\) and the set of admissible continuous control, \(\U\), further extract \(\X_k\subseteq\X_j\), \(j=1,2,\ldots,n\), until the desired properties (\'a la Definition \ref{d:partition}) are met. It is also useful to know under what conditions on the subsystems dynamics, \((A_i,b_i)\), \(i\in\P\) and the set of admissible continuous control, \(\U\), a state-space abstraction is admitted by the switched system \eqref{e:swsys}. The following set of results is dedicated to this purpose.
 
    \begin{proposition}
    \label{prop:auxres1}
        Suppose that the following conditions hold:
        \begin{enumerate}[label=\alph*),leftmargin=*]
            \item \(\X = \R^d\),
            \item \(N = d\),
            \item all \(A_i\), \(i\in\P\) are diagonal matrices with non-zero diagonal elements,
            \item all \(b_i\), \(i\in\P\) have only the \(i\)-th element non-zero, and
            \item \(\U = \R\).
        \end{enumerate}
        Then \(\bigl(\X_j\bigr)_{j=0}^{d+1}\), where \(\X_0 = \{0_d\}\), \(\X_j = \bigl\{x\in\R^d\:|\:x_j\neq 0\:\text{and}\:x_i=0,\:i=1,2,\ldots,d,\:i\neq j\bigr\}\), \(\displaystyle{\X_{d+1} = \X\setminus\bigcup_{j=0}^{d}\X_j}\), is a state-space abstraction of the switched system \eqref{e:swsys}.
    \end{proposition}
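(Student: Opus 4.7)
I would verify the five conditions in Definition \ref{d:partition} one by one. Conditions (i)--(iii) are immediate from the construction: (i) $\X_0 = \{0_d\}$ by definition; (ii) the sets $\X_1,\ldots,\X_d$ are pairwise disjoint because they consist of vectors whose unique non-zero coordinate lies in different positions, $\X_0$ contains only the origin, and $\X_{d+1}$ is the set-theoretic complement of the others in $\X$; (iii) the union equals $\X = \R^d$ by the definition of $\X_{d+1}$. The substance of the proof therefore lies in verifying (iv) and (v).

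The key structural observation is that under hypotheses (c) and (d) the map $y = A_i x + b_i \mu$ acts very cleanly on supports: since $A_i$ is diagonal with non-zero diagonal, $A_i x$ has \emph{exactly} the same set of non-zero coordinates as $x$, and adding $b_i \mu$ only perturbs the $i$-th coordinate. Condition (iv), which fixes $\mu = 0$, then reduces to tracking $A_i x$: for every $x \in \X_j$ with $1 \le j \le d$ one has $A_i x \in \X_j$ (take $k = j$), and for every $x \in \X_{d+1}$ the vector $A_i x$ still has at least two non-zero coordinates (take $k = d+1$).

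For condition (v) I would split into cases. If $\X_j = \X_k$ for some $1 \le k \le d$ and $i = k$, pick $\mu(x) = -[A_k]_{kk} x_k / [b_k]_k$, which is non-zero since $x_k \neq 0$ and $[b_k]_k \neq 0$; this drives $y$ into $\X_0$, so $\ell = 0$ serves all $x \in \X_j$. If $\X_j = \X_k$ with $i \neq k$, then any $\mu \neq 0$ yields a non-zero $i$-th coordinate alongside the surviving non-zero $k$-th coordinate of $A_i x$, so $y \in \X_{d+1}$ and $\ell = d+1$ works uniformly. If $\X_j = \X_{d+1}$, I claim $\ell = d+1$ again: when $i \notin \mathrm{supp}(x)$ any $\mu \neq 0$ simply adds a new non-zero coordinate to the support of $A_i x$; when $i \in \mathrm{supp}(x)$, any $\mu \neq 0$ with $\mu \neq -[A_i]_{ii} x_i / [b_i]_i$ keeps the $i$-th coordinate non-zero, so in either case the support of $y$ has size at least two. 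Since $\U = \R$, the finite set of forbidden $\mu$ values never obstructs this choice.

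\textbf{Main obstacle.} The delicate point is condition (v)'s requirement that a \emph{single} target set $\X_\ell$ serve \emph{all} $x \in \X_j$ simultaneously. On $\X_{d+1}$ a naively chosen $\mu$ can reduce the support of $y$ in different ways for different $x$, sending different points onto different coordinate axes; no single axis set $\X_k$ catches the whole orbit, so the only viable target is $\X_\ell = \X_{d+1}$ itself, and one must carefully avoid the single forbidden value of $\mu$ on the affine line $\mu \mapsto A_i x + b_i \mu$ in order to stay there. It is precisely the richness $\U = \R$ in hypothesis (e) that makes this evasion trivial and thereby closes the argument.
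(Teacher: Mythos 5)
Your proof is correct and follows essentially the same route as the paper's: verify (i)--(iii) directly from the construction, then use the diagonal structure of \(A_i\) and the single non-zero entry of \(b_i\) to track supports for (iv) and (v). In fact your treatment of \(\X_{d+1}\) under condition (v) is more careful than the paper's, which asserts that \emph{any} \(\mu\in\U\setminus\{0\}\) keeps \(y\) in \(\X_{d+1}\); as you observe, when \(i\in\mathrm{supp}(x)\) and \(x\) has exactly two non-zero coordinates, the choice \(\mu=-A_i(i,i)x_i/b_i(i)\) collapses the support to a single coordinate, so this one value of \(\mu\) must be excluded --- which is harmless because Definition \ref{d:partition}(v) only requires some \(\mu(x)\in\U\setminus\{0\}\) and \(\U=\R\).
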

    
     \begin{proposition}
    \label{prop:auxres2}
        Suppose that the following conditions hold:
        \begin{enumerate}[label=\alph*),leftmargin=*]
            \item \(\X = \R^d\),
            \item \(N = d\),
            \item all \(A_i\), \(i\in\P\) are anti-diagonal matrices with non-zero anti-diagonal elements,
            \item all \(b_i\), \(i\in\P\) have only the \((d-i+1)\)-th element non-zero, and
            \item \(\U = \R\).
        \end{enumerate}
        Then \(\bigl(\X_j\bigr)_{j=0}^{d+1}\), where \(\X_0 = \{0_d\}\), \(\X_j = \bigl\{x\in\R^d\:|\:x_j\neq 0\:\text{and}\:x_i=0,\:i=1,2,\ldots,d,\:i\neq j\bigr\}\), \(\displaystyle{\X_{d+1} = \X\setminus\bigcup_{j=0}^{d}\X_j}\), is a state-space abstraction of the switched system \eqref{e:swsys}.
    \end{proposition}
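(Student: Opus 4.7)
The plan is to verify the five conditions of Definition \ref{d:partition} directly for the proposed family of sets. Conditions (i)--(iii) are immediate from the construction: $\X_0 = \{0_d\}$ is a singleton, the punctured coordinate axes $\X_j$ ($j = 1, \ldots, d$) are pairwise disjoint by their coordinate-support description, and $\X_{d+1}$ is defined as the complement of their union in $\R^d = \X$, so the whole collection covers the state space.

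The substantive work lies in conditions (iv) and (v), which track the image of a point $x \in \X_j$ under $A_i x + b_i \mu$. Fix $j \in \{1, \ldots, d\}$ and $i \in \P$. Because $A_i$ is anti-diagonal with non-zero anti-diagonal entries, $(A_i x)_p$ is a non-zero scalar multiple of $x_{d-p+1}$, and therefore has a unique non-zero coordinate, located at $p = d-j+1$, whenever $x \in \X_j$. For condition (iv), taking $\mu = 0$ immediately yields $A_i x \in \X_{d-j+1}$, so the required block is $\X_k = \X_{d-j+1}$, independent of the particular $x \in \X_j$.

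For condition (v), I would split on whether $i = j$. The vector $b_i \mu$ has a single non-zero coordinate, located at position $d-i+1$. \emph{Case $i = j$:} the active positions of $A_i x$ and $b_i \mu$ coincide at $d-j+1$, so one chooses $\mu(x) \in \R \setminus \{0\}$ to be the unique ratio cancelling the $(d-j+1)$-th entry of $A_i x + b_i \mu$; this produces the zero vector, giving $\X_\ell = \X_0$. \emph{Case $i \neq j$:} the positions $d-j+1$ and $d-i+1$ are distinct, so for any non-zero $\mu$ the sum $A_i x + b_i \mu$ has exactly two non-zero coordinates and therefore lies in $\X_{d+1}$, giving $\X_\ell = \X_{d+1}$.

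The overall argument runs closely parallel to the proof of Proposition \ref{prop:auxres1}, the only substantive change being the index flip $j \mapsto d-j+1$ induced by the anti-diagonal geometry. I do not anticipate any genuine obstacle; the only point deserving a line of care is that the cancelling $\mu(x)$ in the first case truly lies in $\U \setminus \{0\}$, which is guaranteed by $x_j \neq 0$ (since $x \in \X_j$), the non-vanishing of the anti-diagonal entry of $A_i$ at row $d-j+1$, the non-vanishing of the $(d-i+1)$-th entry of $b_i$, and the hypothesis $\U = \R$.
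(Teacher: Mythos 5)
Your argument is essentially the paper's: conditions (i)--(iii) by construction, the index flip \(j\mapsto d-j+1\) from the anti-diagonal structure for condition (iv), and a split of condition (v) on \(i=j\) versus \(i\neq j\), cancelling the single active coordinate in the first case and counting two non-zero coordinates in the second; your closing remark that the cancelling \(\mu(x)\) is non-zero and admissible is exactly the point the paper relies on. The one omission is the block \(\X_{d+1}\): conditions (iv) and (v) of Definition \ref{d:partition} range over all of \(\X_1,\ldots,\X_{d+1}\), whereas you fix \(j\in\{1,\ldots,d\}\) only. For (iv) the anti-diagonal structure preserves the number of non-zero coordinates, so \(A_ix\in\X_{d+1}\) whenever \(x\in\X_{d+1}\) (the paper states this explicitly). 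For (v) one must additionally observe that, since \(b_i\mu\) perturbs only the \((d-i+1)\)-th coordinate, for each \(x\in\X_{d+1}\) all but at most one non-zero value of \(\mu\) leave \(A_ix+b_i\mu\) with at least two non-zero coordinates, hence in \(\X_{d+1}\); this case is glossed over in the paper's proof of this proposition as well (though it appears in the proof of Proposition \ref{prop:auxres1}). With that case added, your proof is complete and coincides with the paper's.
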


    Propositions \ref{prop:auxres1} and \ref{prop:auxres2} provide sufficient conditions on the set of states (condition a)), the number of subsystems (condition b)), the structure of the subsystems dynamics (conditions c) and d)), and the set of admissible continuous control, \(\U\), under which a switched system \eqref{e:swsys} admits a state-space abstraction (\'a la Definition \ref{d:partition}). In particular, we utilize diagonal (resp., anti-diagonal) structure of \(A_i\), \(i\in\Nset\) and sparse structure of \(b_i\), \(i\in\Nset\). Proofs of Propositions \ref{prop:auxres1} and \ref{prop:auxres2} are presented in Section \ref{s:all-proofs}. We note that Propositions \ref{prop:auxres1} and \ref{prop:auxres2} are only sufficient and present numerical examples beyond these assumptions in Section \ref{s:num-ex}.
    
    \begin{remark}
    \label{rem:non-switched}
        In the context of discrete-time linear time-invariant (LTI) systems, \(x(t+1) = Ax(t)+b\mu(t)\), \(t=0,1,\ldots,T-1\), design of maximum hands-off continuous control, \(\mu_T\), is achieved by employing \(\ell_1\)-norm approximation of the optimization problem \eqref{e:optprob1} and relying on the restricted isometry property of the transition matrix, \(\displaystyle{\Phi_{T}: = \pmat{\displaystyle{\prod_{t=1}^{T-1}}Ab & \cdots & Ab & b}}\) and existence of a unique solution to \(\Phi_{T}\mu_T = -A^T\xi\) \cite{Nagahara2016b}. The state-space abstraction based technique for the design of maximum hands-off control, presented in this paper, also adds to the literature on maximum hands-off control of discrete-time LTI systems. Indeed, a discrete-time LTI system is a special case of the switched system \eqref{e:swsys} with \(N=1\), \(A_i=A\), \(b_i=b\), \(i\in\Nset\). 
    \end{remark}
    

    We will now present numerical examples.
\section{Numerical examples}
\label{s:num-ex}
    \begin{example}
    \label{ex:num-ex1}
    Consider the switched system \eqref{e:swsys}. Let \(\P = \{1,2\}\) with\\ \((A_1,b_1) = \Bigl(\pmat{1 & 0\\0 & 1},\pmat{1\\0}\Bigr)\), \((A_2,b_2) = \Bigl(\pmat{0 & 1\\1 & 0},\pmat{0\\1}\Bigr)\).   
    Let \(\X = \biggl\{\pmat{x_1\\x_2}\:|\:\pmat{-10\\-10}\leq\pmat{x_1\\x_2}\leq\pmat{10\\10}\biggr\}\),\(\E(\Nset) = \{(1,2),(2,1)\}\), \(\U = [-10,10]\), \(\xi = \pmat{0\\10}\) and \(T=5\) units of time.
    
    First, we note that the switched system under consideration admits a state-space abstraction as described in Definition \ref{d:partition}. We have 
    \(\X_0 = \{0_2\}\), \(\X_1 = \biggl\{\pmat{x_1\\0}\:|\:-10\leq x_1\leq 10\biggr\}\), \(\X_2 = \biggl\{\pmat{0\\x_2}\:|\:-10\leq x_2\leq 10\biggr\}\), \(\X_3 = \biggl\{\pmat{x_1\\x_2}\:|\:-10\leq x_1\leq 10,-10\leq x_2\leq 10,x_1\neq 0,x_2\neq 0\biggr\}\). For \(\X_j = \X_1\) and \(i=1\), \(\X_k=\X_1\) and \(\X_\ell=\X_0\) with \(\mu=-x_1\), for \(\X_j = \X_1\) and \(i=2\), \(\X_k=\X_2\) and \(\X_\ell=\X_0\) with \(\mu=-x_1\),
    for \(\X_j = \X_2\) and \(i=1\), \(\X_k=\X_2\) and \(\X_\ell=\X_3\) with any \(\mu\in\U\setminus\{0\}\),
    for \(\X_j = \X_2\) and \(i=2\), \(\X_k=\X_1\) and \(\X_\ell=\X_3\) with any \(\mu\in\U\setminus\{0\}\),
    for \(\X_j = \X_3\) and \(i=1\), \(\X_k=\X_3\) and \(\X_\ell=\X_3\) with any \(\mu\in\U\setminus\{0\}\), and
    for \(\X_j = \X_3\) and \(i=2\), \(\X_k=\X_3\) and \(\X_\ell=\X_3\) with any \(\mu\in\U\setminus\{0\}\).
    
    Second, we construct \(G(V,E)\) that contains \(V = \{v_0,v_1,v_2,v_3\}\) with \(\ell(v_0)=\X_0\), \(\ell(v_1)=\X_1\), \(\ell(v_2)=\X_2\), \(\ell(v_3)=\X_3\), \(E = E_1\cup E_2\), where \(E_1 = \{(v_1,v_1),(v_1,v_2),(v_2,v_2),(v_2,v_1)\),\\\((v_3,v_3),(v_3,v_3)\}\) with \(\ell(v_1,v_1) = (1,0)\), \(\ell(v_1,v_2) = (2,0)\), \(\ell(v_2,v_2) = (1,0)\), \(\ell(v_2,v_1) = (2,0)\), \(\ell(v_3,v_3) = (1,0)\), \(\ell(v_3,v_3) = (2,0)\), and 
    \(E_2 = \{(v_1,v_0),(v_1,v_0),(v_2,v_3),(v_2,v_3)\),\\\((v_3,v_3),(v_3,v_3)\}\) with \(\ell(v_1,v_0) = (1,\mu=-x_1)\), \(\ell(v_1,v_0) = (2,\mu=-x_1)\), \(\ell(v_2,v_3) = (1,\mu\neq 0)\), \(\ell(v_2,v_3) = (2,\mu\neq 0)\), \(\ell(v_3,v_3) = (1,\mu\neq 0)\), \(\ell(v_3,v_3) = (2,\mu\neq 0)\).
    
    Third, \(\xi\in\X_s=\X_2\). We construct the set \(\W_\T(\X_s,\X_0) = \{W_5^{(1)} = v_2,(v_2,v_2),v_2,(v_2,v_1)\),\\\(v_1,(v_1,v_1),v_1,(v_1,v_0),v_0\), \(W_5^{(2)}=v_2,(v_2,v_1),v_1,(v_1,v_0),v_0\}\). We have \(w(W_5^{(1)})=4\) and \(w(W_5^{(2)})=2\). Thus, \(W_5^{(2)}\) qualifies to be a \(\xi\)-hands-off walk.
    
    Fourth, we obtain \((\nu^*_T,\mu^*_T) = \Biggl(\pmat{2\\1\\2\\1\\2},\pmat{0\\-10\\0\\0\\0}\Biggr)\). It follows that \(x(T)=0_2\) and \(\norm{\Delta(\nu_T)}_0+\norm{\mu_T}_0=4+1=5\). We have that \(x(0)=\xi\), \(x(1)=\pmat{10\\0}\), \(x(2)=x(3)=x(4)=x(5)=\pmat{0\\0}\).   
    \end{example}
    
    \begin{example}
    \label{ex:num-ex2}
        Consider the switched system \eqref{e:swsys}. Let \(\Nset = \{1,2\}\) with\\ \((A_1,b_1) = \pmat{\pmat{1 & 2\\2 & 4},\pmat{1\\2}}\), \((A_2,b_2) = \pmat{\pmat{1 & 2\\3 & 4},\pmat{1\\1}}\). Let \(\X = \{x\in\R^2\:|\:x_i\geq 0,\:i=1,2\}\), \(E(\Nset) = \{(1,1),(1,2),(2,1),(2,2)\}\), \(\U=\R\), \(\xi = \pmat{0\\10}\) and \(T=5\) units of time.
        
        First, the switched system under consideration admits a state-space abstraction as described in Definition \ref{d:partition}. We have 
    \(\X_0 = \{0_2\}\), \(\X_1 = \biggl\{\pmat{x_1\\0}\:|\:x_1>0\biggr\}\), \(\X_2 = \biggl\{\pmat{0\\x_2}\:|\:x_2 > 0\biggr\}\), \(\X_3 = \biggl\{\pmat{x_1\\x_2}\:|\:x_1> 0, x_2> 0\biggr\}\). For 
    \(\X_j = \X_1\) and \(i=1\), \(\X_k=\X_3\) and \(\X_\ell=\X_0\) with \(\mu=-x_1\), 
    for \(\X_j = \X_1\) and \(i=2\), \(\X_k=\X_3\) and \(\X_\ell=\X_3\) with \(\mu=-x_1\),
    for \(\X_j = \X_2\) and \(i=1\), \(\X_k=\X_3\) and \(\X_\ell=\X_0\) with any \(\mu=-2x_2\),
    for \(\X_j = \X_2\) and \(i=2\), \(\X_k=\X_3\) and \(\X_\ell=\X_2\) with any \(\mu=-2x_2\),
    for \(\X_j = \X_3\) and \(i=1\), \(\X_k=\X_3\) and \(\X_\ell=\X_3\) with any \(\mu\in\U\setminus\{0\}\), and
    for \(\X_j = \X_3\) and \(i=2\), \(\X_k=\X_3\) and \(\X_\ell=\X_3\) with any \(\mu\in\U\setminus\{0\}\).

        Second, we construct \(G(V,E)\) that contains \(V = \{v_0,v_1,v_2,v_3\}\) with \(\ell(v_0)=\X_0\), \(\ell(v_1)=\X_1\), \(\ell(v_2)=\X_2\), \(\ell(v_3)=\X_3\), \(E = E_1\cup E_2\), where \(E_1 = \{(v_1,v_3),(v_1,v_3),(v_2,v_3),(v_2,v_3)\),\\\((v_3,v_3),(v_3,v_3)\}\) with \(\ell(v_1,v_3) = (1,0)\), \(\ell(v_1,v_3) = (2,0)\), \(\ell(v_2,v_3) = (1,0)\), \(\ell(v_2,v_3) = (2,0)\), \(\ell(v_3,v_3) = (1,0)\), \(\ell(v_3,v_3) = (2,0)\), and 
    \(E_2 = \{(v_1,v_0),(v_1,v_2),(v_2,v_0),(v_2,v_2)\),\\\((v_3,v_3),(v_3,v_3)\}\) with \(\ell(v_1,v_0) = (1,\mu=-x_1)\), \(\ell(v_1,v_2) = (2,\mu=-x_1)\), \(\ell(v_2,v_0) = (1,\mu=-2x_2)\), \(\ell(v_2,v_2) = (2,\mu=-2x_2)\), \(\ell(v_3,v_3) = (1,\mu\neq 0)\), \(\ell(v_3,v_3) = (2,\mu\neq 0)\).
    
        Third, \(\xi\in\X_s = \X_2\). Notice that there is a direct edge from \(v_2\) to \(v_0\) with label \((1,-2x_2)\) but no path through any other vertex. Thus, \(W_T = v_2,(v_2,v_0),v_0\) qualifies to be a \(\xi\)-hands-off-walk/ 
        
        Fourth, we obtain \((\nu^*_T,\mu^*_T) = \pmat{\pmat{1\\1\\1\\1\\1},\pmat{-20\\0\\0\\0\\0}}\). It follows that \(x(T) = 0_2\) and \(\norm{\Delta(\nu_T)}_{0}+\norm{\mu_T}_{0} = 1+1 = 2\). We have \(x(0)=\xi\), \(x(1)=x(2)=x(3)=x(4) = \pmat{0\\0}\).
    \end{example}
    
    \begin{remark}
    \label{rem:not-hold}
    \rm{
        In the absence of restrictions on discrete and continuous control actions, maximum hands-off hybrid control for the switched systems in Examples \ref{ex:num-ex1} and \ref{ex:num-ex2} can be attempted to be designed using a convex relaxation of the optimization problem \eqref{e:optprob1} \cite{abc}. In particular, an \(\ell_1\)-norm approximation can be considered. However, for a solution of the \(\ell_1\)-optimization problem to match a solution of the \(\ell_0\)-optimization problem, we need to rely on restricted isometry property (resp., range space property) of the transition matrix, \(\displaystyle{\Phi_{\nu_T}: = \pmat{\displaystyle{\prod_{t=1}^{T-1}}A_{\nu(t)}b_{\nu(0)} & \cdots & A_{\nu(T-1)}b_{\nu(T-2)} & b_{\nu(T-1)}}}\) and existence of a unique solution to \(\Phi_{\nu_T}\mu_T = -\displaystyle{\prod_{t=0}^{T-1}}A_{\nu(t)}\xi\), as demonstrated in \cite{abc}. In general, verifying the above conditions numerically is not a straightforward task. We typically rely on sufficient conditions on the elements of the matrices, \((A_i,b_i)\), \(i\in\Nset\) for this purpose, see e.g., \cite[Propositions 3 and 4]{abc}. We note that in the setting of Examples \ref{ex:num-ex1} and \ref{ex:num-ex2}, \cite[Propositions 3 and 4]{abc} do not hold. 
        We have, however, been able to solve the optimization problem \eqref{e:optprob1} for these examples using the techniques proposed in this paper. 
    }
    \end{remark}
\section{Concluding remarks}
\label{s:concln}
    In this paper we studied an algorithm for the design of maximum hands-off hybrid control sequence for discrete-time switched linear systems under pre-specified restrictions on admissible discrete and continuous control. We employed graph theory as the primary apparatus for our analysis. Our results are more general than the existing literature but rely on the existence of a finite state-space abstraction for the switched system, which is restrictive. The problem of computation of maximum hands-off control sequences for switched systems using graph algorithms beyond the admissibility of a state-space abstraction, described in Definition \ref{d:partition}, is currently under investigation and the findings will be reported elsewhere. 

\section{Proofs of our results}
\label{s:all-proofs}
    \begin{proof}[Proof of Theorem \ref{t:mainres}]
        Let \((\overline{\nu}_T,\overline{\mu}_T)\) be a hybrid control sequence obtained from Algorithm \ref{algo:design}. We need to show that it is a maximum hands-off hybrid control sequence.
    
    Firstly, let \(W^*_T=\overline{v}_0,(\overline{v}_0,\overline{v}_1),\overline{v}_1,\ldots,\overline{v}_{r-1},(\overline{v}_{r-1},\overline{v}_{r}),\overline{v}_r\in
    \W_\T(\X_s,\X_0)\) be the \(\xi\)-hands-off walk employed in the design of \((\overline{\nu}_T,\overline{\mu}_T)\). By construction of the transition graph, \(G(V,{E})\), for the switched system \eqref{e:swsys}, we have that for any two consecutive edges, \((v',v'')\) and \((v'',v''')\) that appear on \(W^*_T\), \((\alpha(v',v''),\alpha(v'',v'''))\in\E(\Nset)\), and for any edge, \((v',v'')\) that appears on \(W^*_T\), \(\beta(v',v'')\in\{0,u\}\), where \(u\in\U\). In view of \ref{step:algo_step2} of Algorithm \ref{algo:design}, the hybrid control sequence, \((\overline{\nu}_T,\overline{\mu}_T)\in\H\).
    
    Secondly, by definition of state-space abstraction and its corresponding transition graph, we have that all \(x\in\ell(\overline{v}_k)\) are steered to \(y=A_ix+B_iu\in\ell(\overline{v}_{k+1})\), where \(\alpha(\overline{v}_k,\overline{v}_{k+1})=i\) and \(\beta(\overline{v}_k,\overline{v}_{k+1})=u\), \(k=0,1,\ldots,r-1\). Further, by definition of a \(\xi\)-hands-off walk on \(G(V,{E})\), \(\xi\in\ell(\overline{v}_0)\) and \(\ell(\overline{v}_r)=\X_0=\{0_d\}\). Also, \(A_i0_d+b_i0=0_d\) for all \(i\in\P\). It follows that \(x(0)=\xi\) is steered to \(x(T)=0_d\) under \((\overline{\nu}_T,\overline{\mu}_T)\). Thus, the hybrid control sequence, \((\overline{\nu}_T,\overline{\mu}_T)\in\C_{\xi,T}\).
    
    Thirdly, assume that \((\overline{\nu}_T,\overline{\mu}_T)\) is not a maximum hands-off hybrid control sequence. Then there must exist \((\tilde{\nu}_T,\tilde{\mu}_T)\in\C_{\xi,T}\) that satisfies\footnote{Notice that for \(r<T\), a constant term with a maximum value \(T-r-2\) gets added on both sides of \eqref{e:pf1_step1}.} 
    \begin{align}
    \label{e:pf1_step1}
        \norm{\Delta(\tilde{\nu}_T)}_{0}+\norm{\Delta(\tilde{\mu}_T)}_{0}<\norm{\Delta(\overline{\nu}_T)}_{0}+\norm{\Delta(\overline{\mu}_T)}_{0}.
    \end{align} 
    Now, 
    \begin{align*}
        \norm{\Delta(\tilde{\nu}_T)}_0 &= \norm{\pmat{\tilde{\nu}(1)-\tilde{\nu}(0)\\\tilde{\nu}(2)-\tilde{\nu}(1)\\\vdots\\\tilde{\nu}(T-1)-\tilde{\nu}(T-2)}}_0
       =\norm{\pmat{\overline{\alpha}(\tilde{v}_0,\tilde{v}_1)\\\overline{\alpha}(\tilde{v}_1,\tilde{v}_2)\\\vdots\\\overline{\alpha}(\tilde{v}_{T-2},\tilde{v}_{T-1})}}_0\\
       \intertext{and}
        \norm{\tilde{\mu}_T}_0 &= \norm{\pmat{\tilde{\mu}(0)\\\tilde{\mu}(1)\\\vdots\\\tilde{\mu}(T-1)}}_0
       =\norm{\pmat{\overline{\beta}(\tilde{v}_0,\tilde{v}_1)\\\overline{\beta}(\tilde{v}_1,\tilde{v}_2)\\\vdots\\\overline{\beta}(\tilde{v}_{T-2},\tilde{v}_{T-1})}}_0,    \end{align*}
    where \(\tilde{W}_T = \tilde{v}_0,(\tilde{v}_0,\tilde{v}_1),\tilde{v}_1,\ldots,\tilde{v}_{r-1},(\tilde{v}_{r-1},\tilde{v}_{r}),\tilde{v}_r\) is the \(\xi\)-hands-off walk corresponding to \((\tilde{\nu}_T,\tilde{\mu}_T)\). Here, the correspondence is in the sense of construction of \((\tilde{\nu}_T,\tilde{\mu}_T)\) from \(\tilde{W}_T\) as in Algorithm \ref{algo:design}. In view of \eqref{e:pf1_step1}, we must have that 
    \[
        \norm{\pmat{\overline{\alpha}(\tilde{v}_0,\tilde{v}_1)\\\overline{\alpha}(\tilde{v}_1,\tilde{v}_2)\\\vdots\\
        \overline{\alpha}(\tilde{v}_{T-2},\tilde{v}_{T-1})}}_0 + \norm{\pmat{\overline{\beta}(\tilde{v}_0,\tilde{v}_1)\\\overline{\beta}(\tilde{v}_1,\tilde{v}_2)\\\vdots\\
        \overline{\beta}(\tilde{v}_{T-2},\tilde{v}_{T-1})}}_0 < \norm{\pmat{\overline{\alpha}(\overline{v}_0,\overline{v}_1)\\\overline{\alpha}(\overline{v}_1,\overline{v}_2)\\\vdots\\
        \overline{\alpha}(\overline{v}_{T-2},\overline{v}_{T-1})}}_0 + \norm{\pmat{\overline{\beta}(\overline{v}_0,\overline{v}_1)\\\overline{\beta}(\overline{v}_1,\overline{v}_2)\\\vdots\\
        \overline{\beta}(\overline{v}_{T-2},\overline{v}_{T-1})}}_0.
    \]
   Then it must hold that
    \[
        \sum_{m=1}^{r-1}\overline{\alpha}(\tilde{v}_m,\tilde{v}_{m+1}) + \sum_{m=1}^{r-1}\overline{\beta}(\tilde{v}_m,\tilde{v}_{m+1}) < \sum_{m=1}^{r-1}\overline{\alpha}(\overline{v}_m,\overline{v}_{m+1}) + \sum_{m=1}^{r-1}\overline{\beta}(\overline{v}_m,\overline{v}_{m+1}).
    \]
   Thus, \(w(W^*_T)>w(\tilde{W}_T)\) and \(W^*_T\) is not a \(\xi\)-hands-off walk, which is a contradiction. The hybrid control sequence, \((\overline{\nu}_T,\overline{\mu}_T)\), is, therefore, a maximum hands-off control sequence for the switched system \eqref{e:swsys}.
    
    The assertion of Theorem \ref{t:mainres} follows.

    \end{proof}
    
    The following notation will be useful in our proofs of Propositions \ref{prop:auxres1}-\ref{prop:auxres2}: Let \(A_i(p,q)\) denote the element of \(A_i\) at the \(p\)-th row and \(q\)-th column, \(p\in\{1,2,\ldots,d\}\), \(q\in\{1,2,\ldots,d\}\), \(i\in\Nset\). Let \(b_i(p)\) denote the element of \(b_i\) at the \(p\)-th row, \(p\in\{1,2,\ldots,d\}\), \(i\in\Nset\). We let \(x_i\) denote the \(i\)-th element of \(x\), \(i=1,2,\ldots,d\).
    
    \begin{proof}[Proof of Proposition \ref{prop:auxres1}]
        We need to show that \(\bigl(\X_j\bigr)_{j=0}^{d+1}\) satisfies conditions i)-v) in Definition \ref{d:partition}.
        
        First, in view of a), conditions i)-iii) are satisfied by the choice of the elements of \(\bigl(\X_j\bigr)_{j=0}^{d+1}\). It remains to show that conditions iv)-v) hold.
        
        Second, in view of c), we have that for all \(x\in\X_j\) and \(i\in\Nset\), \(y=A_ix\in\X_k\), \(j,k=1,2,\ldots,d+1\), \(j=k\). Thus, condition iv) holds.
        
        Third, in view of b), c), d) and e), we have that for all \(x\in\X_j\) and \(i=j\in\Nset\), \(y=A_ix+b_i\mu\in\X_0\) with \(\mu = -\frac{A_i(j,j)}{b_i(j)}x_j\), \(j=1,2,\ldots,d\), and for all \(x\in\X_j\) and \(i\neq j\in\Nset\), \(y=A_ix+b_i\mu\in\X_{d+1}\) with any \(\mu\in\U\setminus\{0\}\), \(j=1,2,\ldots,d\). Further, for all \(x\in\X_{d+1}\) and \(i\in\Nset\), \(y=A_ix+b_i\mu\in\X_{d+1}\) with any \(\mu\in\U\setminus\{0\}\).
        
        The assertion of Proposition \ref{prop:auxres1} follows.
    \end{proof}
    
    \begin{proof}[Proof of Proposition \ref{prop:auxres2}]
        We need to show that \(\bigl(\X_j\bigr)_{j=0}^{d+1}\) satisfies conditions i)-v) in Definition \ref{d:partition}.
        
        First, in view of a), conditions i)-iii) are satisfied by the choice of the elements of \(\bigl(\X_j\bigr)_{j=0}^{d+1}\). It remains to show that conditions iv)-v) hold.
        
        Second, in view of c), we have that for all \(x\in\X_j\) and \(i\in\Nset\), \(y=A_ix\in\X_{d-j+1}\), \(j=1,2,\ldots,d\). Further, for all \(x\in\X_{d+1}\) and \(i\in\Nset\), \(y=A_ix\in\X_{d+1}\). Thus, condition iv) holds.
        
        Third, in view of b), c), d) and e), we have that for all \(x\in\X_j\) and \(i=j\in\Nset\), \(y=A_ix+b_i\mu\in\X_0\) with \(\mu = -\frac{A_i(j,d-j+1)}{b(d-j+1)}x_j\), \(j=1,2,\ldots,d\), and for all \(x\in\X_j\) and \(i\neq j\in\Nset\), \(y=A_ix+b_i\mu\in\X_{d+1}\) with any \(\mu\in\U\setminus\{0\}\). Thus, condition v) holds.
        
        The assertion of Proposition \ref{prop:auxres2} follows.
    \end{proof}


\end{document}